\newcommand{\Addresses}{{
		\bigskip
		\footnotesize
		
		Humboldt-Universit\"at zu Berlin, Institut f\"ur Mathematik, Rudower Chausee 25
		\hfill \newline\texttt{}
		\indent 12489 Berlin, Germany} 
	\par\nopagebreak
	\textit{E-mail address}: \texttt{andreibud95@protonmail.com}
}
\theoremstyle{plain}
\newtheorem{trm}{Theorem}[section]
\newtheorem{lm}[trm]{Lemma}
\theoremstyle{definition}
\newtheorem{defi}[trm]{Definition}
\def\OO{\mathcal{O}}
\def\cM{\mathcal{M}}
\def\cR{\mathcal{R}}
\def\rr{\overline{\mathcal{R}}}
\def\Pic0{{\rm Pic}^0(X)}
\begin{document}
\title{Irreducibility of a universal Prym-Brill-Noether locus}
\author{Andrei Bud}
\date{}
\maketitle
\begin{abstract}
	For genus $g = \frac{r(r+1)}{2}+1$, we prove that via the forgetful map, the universal Prym-Brill-Noether locus $\mathcal{V}^r_g$ has a unique irreducible component dominating the moduli space $\mathcal{R}_g$ of Prym curves. 
\end{abstract}
 
\section{Introduction} The moduli space $\cR_g$ of Prym curves was brought to the attention of algebraic geometers by Mumford in his influential paper \cite{MumfordPrym}, as a way of understanding principally polarized Abelian varieties. For an element $[C,\eta]$ of $\cR_g$ we let $\pi\colon \widetilde{C}\rightarrow C$ be the associated double cover and let $\mathrm{Nm}_\pi\colon \mathrm{Pic}^{2g-2}(\widetilde{C}) \rightarrow \mathrm{Pic}^{2g-2}(C)$ be the norm map of this morphism of curves. In this situation, the preimage of $\omega_C$ consists of two disjoint varieties 
\[ P^+ = \left\{L\in \mathrm{Pic}^{2g-2}(\widetilde{C}) \ | \ \mathrm{Nm}(L) = \omega_C \ \mathrm{and} \ h^0(\widetilde{C}, L) \equiv 0\ (\mathrm{mod} \ 2) \right\} \]
and 
\[ P^- = \left\{L\in \mathrm{Pic}^{2g-2}(\widetilde{C}) \ | \ \mathrm{Nm}(L) = \omega_C \ \mathrm{and} \ h^0(\widetilde{C}, L) \equiv 1\ (\mathrm{mod} \ 2) \right\}. \]
isomorphic to the Prym variety in $\mathrm{Pic}^0(\widetilde{C})$.

Following this development, Welters emphasized in \cite{Welters} that Prym-Brill-Noether theory can be employed in order to understand the geometry of subvarieties of Prym varieties. More precisely, he considered the loci 
\[V^r(C,\eta) \coloneqq \left\{ L \in \mathrm{Pic}^{2g-2}(\widetilde{C}) \ | \ \mathrm{Nm}(L)\cong \omega_C, \ h^0(\widetilde{C}, L) \geq r+1, \ \mathrm{and} \ h^0(\widetilde{C}, L) \equiv r+1\ (\mathrm{mod} \ 2)  \right\}\]
in order to study the singularities of the theta divisor of the associated Prym variety. The relation between Prym-Brill-Noether theory and the study of singularities of theta divisors piqued the interest of other mathematicians. 
The two papers \cite{Welters} and \cite{Bertram} showed that when $g \geq \frac{r(r+1)}{2}+1$, the locus $V^r(C,\eta)$ is non-empty of dimension at least $g-1-\frac{r(r+1)}{2}$. In addition, for a generic $[C,\eta]\in \cR_g$, the locus $V^r(C,\eta)$ has exactly this dimension when $g\geq\frac{r(r+1)}{2}+1$ and is empty when $g < \frac{r(r+1)}{2}+1$, see \cite{SchwarzPrym}. Subsequently in \cite{DeConciniPragacz}, De Concini and Pragacz viewed $V^r(C,\eta)$ as a Lagrangian degeneracy locus (cf. \cite{mumfordtheta}) and computed the class of $V^r(C,\eta)$ in the Prym variety when it has the expected dimension $g-1-\frac{r(r+1)}{2}$. 

In recent years, two new perspectives for the study of Prym-Brill-Noether theory emerged. On one hand, it has been studied from the point of view of tropical geometry, see \cite{tropicalPBN-4authors} and \cite{tropicalPBN-Len_Ulirsch}, thus providing another proof for the dimension estimate of $V^r(C,\eta)$ for a generic $[C,\eta]$ and, on the other hand, from the perspective of moduli theory, with a view to understanding the birational geometry of $\cR_g$ for small values of $g$. It is natural to ask when $g \geq \frac{r(r+1)}{2}+1$ whether the universal Prym-Brill-Noether locus 
\[ \mathcal{V}^r_g\coloneqq \left\{[C,\eta,L] \ | \ [C,\eta]\in \mathcal{R}_g \ \mathrm{and} \ L \in V^r(C,\eta)\right\}\]
has a unique irreducible component dominating the moduli space $\cR_g$. This is true for $g > \frac{r(r+1)}{2}+1$ because the fibre above a general $[C,\eta]\in \cR_g$ is irreducible, see \cite[Exemples 6.2]{DebarreLefschetz}. However, as pointed out in \cite{JensenPaynesurvey}, this was not known for $g = \frac{r(r+1)}{2}+1$. The present paper aims at showing that when $g = \frac{r(r+1)}{2}+1$, the moduli space $\mathcal{V}^r_g$ has a unique irreducible component dominating $\cR_g$. In the interest of proving this result, we will consider the compactification $\rr_g$ of the moduli space of Prym curves $\cR_g$, see \cite{Casa} and \cite{FarLud}. Ultimately, we degenerate to the boundary locus of $\rr_g$ and employ the theory of limit linear series, adapted to our situation. 

\textbf{Acknowledgements:} I would like to thank my advisor Gavril Farkas for suggesting this problem and for all his support and guidance along the way. I also want to thank the anonymous referee for the suggested improvements to this paper.  
\section{Prym linear series}

Our goal in this section is to provide a suitable definition of Prym linear series and then use it to prove our main result. We will start by recalling some definitions regarding limit linear series, while referring the reader to \cite{limitlinearbasic} for a throughout study.  

First, recall that a $g^r_d$ on a smooth curve $Y$ is defined to be a pair $(V, L)$ of a degree $d$ line bundle $L$ together with an $(r+1)$-dimensional vector subspace $V \subseteq H^0(Y,L)$. This definition extends naturally to curves of compact type. 

Let $Y$ be a genus $g$ curve of compact type. A crude limit $g^r_d$ consists of a $g^r_d$ $(V_i, L_i)$ for every irreducible component $Y_i$ of $Y$, further satisfying the following property: 

$\bullet$ Let $q$ be a node of $Y$ connecting two irreducible components $Y_j$ and $Y_k$ and let $0 \leq a_0<\cdots < a_r \leq d$ and $0 \leq b_0<\cdots < b_r \leq d$ the vanishing orders at $q$ of the sections in $V_j$ and $V_k$ respectively. Then for any $0\leq i \leq r$ we have 
\[ a_i + b_{r-i} \geq d. \]
If for any node $q$ all the inequalities are in fact equalities, the limit linear system is called refined. 
   
Lastly, we define the Brill-Noether number associated to a smooth pointed curve and a $g^r_d$ on it. Let $(Y,p_1,\ldots,p_n)$ be a smooth genus $g$ curve together with $n$ points on it, and $l = (V,L)$ be a $g^r_d$ on $Y$. Let $0\leq a_0^i <\cdots < a_r^i\leq d$ the vanishing orders at $p_i$ of the sections in $V$. The Brill-Noether number of the $g^r_d$ with respect to the points $p_i$ is defined as 
\[ \rho(l, p_1,\ldots,p_n) \coloneqq g-(r+1)(g-d+r) - \sum_{i=1}^{n}\sum_{j=0}^{r} a^i_j + n\cdot \frac{r(r+1)}{2}. \] 

Having these definitions, we are ready to particularize to our situation. 

Let $[C,\eta]\in \cR_g$ be a generic Prym curve. Then, we know from \cite[Lemma 3.2]{Welters} that a generic element $L\in V^r(C,\eta)$ satisfies $h^0(\widetilde{C},L) = r+1$. Moreover, when $g= \frac{r(r+1)}{2}+1$ we know from \cite[Theorem 1.1]{SchwarzPrym} that all $L\in V^r(C,\eta)$ satisfy $h^0(\widetilde{C},L) = r+1$. In particular, the line bundle $L$ can be viewed as a $g^r_{2g-2}$ on the curve $\widetilde{C}$. Furthermore, up to restricting to an open subset, we can view all irreducible components of $\mathcal{V}^r_g$ dominating $\cR_g$ as contained in the moduli space $\mathcal{G}^r_{2g-2}(\cR_g)$ parametrizing limit $g^r_{2g-2}$ over double covers $[\pi\colon\widetilde{C}\rightarrow C]$ where $\widetilde{C}$ is of compact type. We ask what points can appear in the compactification of $\mathcal{V}^r_{g}$ inside this space.  

Let $[\pi\colon \widetilde{C}\rightarrow C] \in \rr_g$ such that $C$ is of compact type and admits a unique irreducible component $X$ satisfying $\eta_X \ncong \OO_X$. For this component $X$, we denote by $p^X_1,\ldots, p^X_{s_X}$ its nodes and by $g^X_1,\ldots, g^X_{s_X}$ the genera of the connected components of $C\setminus X$ glued to $X$ at these points.  For an irreducible component $Y$ of $C$, different from $X$, we denote by $q^Y$ the node glueing $Y$ to the connected component of $C\setminus Y$ containing $X$, and by $p^Y_1,\ldots, p_{s_Y}^Y$ the other nodes of $Y$. We denote by $g^Y_0, g^Y_1,\ldots, g^Y_{s_Y}$ the genera of the connected components of $C\setminus Y$ glued to $Y$ at these points. 

Using the above notations, we can define the concept of a Prym limit $g^r_{2g-2}$: 
\begin{defi}
	 A Prym limit $g^r_{2g-2}$, denoted $L$, is a crude limit $g^r_{2g-2}$ on $\widetilde{C}$ satisfying the following two conditions: 
	\begin{enumerate}
		\item For the unique component $\widetilde{X}$ of $\widetilde{C}$ above $X$, the $\widetilde{X}$-aspect $L_{\widetilde{X}}$ of $L$ satisfies
		 \[Nm_{\pi_{|\widetilde{X}}} L_{\widetilde{X}} \cong \omega_X(\sum_{i=1}^s2g^X_ip^X_i).\]
		\item For a component $Y$ of $C$ different from $X$, we denote by $Y_1$ and $Y_2$ the two irreducible components of $\widetilde{C}$ above it. We identify these two components with $Y$ via the map $\pi$. With this identification the $Y_1$ and $Y_2$ aspects of $L$ satisfy: 
		\[ L_{Y_1}\otimes L_{Y_2} \cong \omega_Y\text{\large(}(2g-2+2g^Y_0)q^Y + \sum_{i=1}^sg_i^Yp_i^Y\text{\large)}. \]
	\end{enumerate} 
	
\end{defi}

Because the points in the boundary need to respect the norm condition, we immediately obtain that: 
\begin{lm}
	Let $[\pi\colon \widetilde{C}\rightarrow C]\in \rr_g$ with $\widetilde{C}$ of compact type and let $\overline{\mathcal{V}}^r_g$ the closure of $\mathcal{V}^r_g$ inside $\mathcal{G}^r_{2g-2}(\cR_g)$. Then the fibre of the map $\overline{\mathcal{V}}^r_g\rightarrow \rr_g$ over the point $[\pi\colon \widetilde{C}\rightarrow C]$ is contained in the locus of Prym limit $g^r_{2g-2}$ on $[\pi\colon \widetilde{C}\rightarrow C]$. 
\end{lm}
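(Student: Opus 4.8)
\emph{Proof idea.} The plan is to realise any point of $\overline{\mathcal V}^r_g$ lying over such a boundary point as a limit along a one-parameter degeneration and to transport the norm condition to the limit. So I would start with $[\pi\colon\widetilde C\to C,\,L]\in\overline{\mathcal V}^r_g$ over $[\pi\colon\widetilde C\to C]\in\rr_g$ with $\widetilde C$ of compact type, and choose a discrete valuation ring $R$ with fraction field $K$ together with a family of Prym curves $\pi\colon\widetilde{\mathcal C}\to\mathcal C\to\operatorname{Spec}R$, smooth over $K$, with special fibre $[\pi\colon\widetilde C\to C]$, carrying a $g^r_{2g-2}$ over $\widetilde{\mathcal C}_K$ that lies in $V^r(\mathcal C_K,\eta_K)$ and whose limit in $\mathcal G^r_{2g-2}(\cR_g)$ is $L$; such a family exists by definition of the closure, and in particular $\mathrm{Nm}_{\pi_K}(L_K)\cong\omega_{\mathcal C_K}$. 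After a finite base change and, if necessary, resolving the total space, I may assume that $\mathcal C$ and $\widetilde{\mathcal C}$ are regular and that $L_K$ is cut out by a line bundle $\mathcal L$ on $\widetilde{\mathcal C}$. Since $L$ is already a crude limit $g^r_{2g-2}$ on $\widetilde C$, it remains only to verify conditions (1) and (2) of the definition of a Prym limit $g^r_{2g-2}$ for the aspects of $L$.

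The next step is to apply the relative norm $\mathrm{Nm}_\pi\colon\mathrm{Pic}(\widetilde{\mathcal C})\to\mathrm{Pic}(\mathcal C)$ to $\mathcal L$. Its restriction to the generic fibre $\mathcal C_K$ is $\omega_{\mathcal C_K}$, so $\mathrm{Nm}_\pi(\mathcal L)$ and $\omega_{\mathcal C/R}$ agree over $\mathcal C_K$; as $\mathcal C$ is regular and $C$ is a tree of smooth curves, this forces
\[ \mathrm{Nm}_\pi(\mathcal L)\;\cong\;\omega_{\mathcal C/R}\otimes\OO_{\mathcal C}\Big(\sum_i a_iC_i\Big) \]
for integers $a_i$ indexed by the components $C_i$ of $C$. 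I would then restrict this identity to the relevant components, using that $\mathrm{Nm}$ commutes with base change: since $\pi^{-1}(X)=\widetilde X$ it expresses $\mathrm{Nm}_{\pi_{|\widetilde X}}(\mathcal L|_{\widetilde X})$ through $\omega_{\mathcal C/R}|_X$ and $\OO_{\mathcal C}(\sum a_iC_i)|_X$, and since $\pi^{-1}(Y)=Y_1\sqcup Y_2$ for $Y\neq X$ it expresses $\mathcal L|_{Y_1}\otimes\mathcal L|_{Y_2}$ through $\omega_{\mathcal C/R}|_Y$ and $\OO_{\mathcal C}(\sum a_iC_i)|_Y$. Feeding in the adjunction formulas $\omega_{\mathcal C/R}|_X\cong\omega_X(\sum_i p_i^X)$ and $\omega_{\mathcal C/R}|_Y\cong\omega_Y(q^Y+\sum_i p_i^Y)$, and the fact that the $\widetilde X$- and $Y_i$-aspects of $L$ differ from $\mathcal L|_{\widetilde X}$ and $\mathcal L|_{Y_i}$ only by the Eisenbud--Harris vertical twists, would then yield conditions (1) and (2), the extra term $2g-2$ at $q^Y$ in (2) reflecting that $L_{Y_1}$ and $L_{Y_2}$ are normalised on opposite sides of $\widetilde C$.

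The hard part will be exactly this last step: showing that the combination of the vertical twist $\sum a_iC_i$ coming from the norm with the Eisenbud--Harris twists (which a priori I only know make the relevant degrees equal to $2g-2$, respectively $0$) reproduces \emph{precisely} the asserted line bundles — the coefficient $2g_i^X$ at each node $p_i^X$ in (1), and the coefficient $2g-2+2g_0^Y$ at $q^Y$ in (2) — and not merely bundles of the correct degree. I expect this to reduce to a node-by-node computation mirroring the well-known description of the limit canonical series on a compact-type curve, whose $Z$-aspect is $\omega_Z\!\left(\sum_p 2g_p^Z\,p\right)$: because $\mathrm{Nm}(L_K)\cong\omega_{\mathcal C_K}$, the norm of each aspect of $L$ is constrained to agree, up to an explicit and local twist, with the corresponding aspect of this limit canonical series, and unwinding that constraint should give exactly conditions (1) and (2). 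Once this bookkeeping is carried out there is no further obstacle, which is the sense in which the statement follows at once from the norm condition already imposed on $\mathcal V^r_g$.
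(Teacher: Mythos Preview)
Your proposal is correct and is exactly what the paper leaves implicit: the paper gives no argument beyond the sentence ``Because the points in the boundary need to respect the norm condition, we immediately obtain'' preceding the lemma, and your one-parameter family together with the relative norm is the standard way to make that sentence rigorous. The step you flag as hard is only bookkeeping: on a regular surface over a DVR with tree-like special fibre a line bundle is determined by its generic fibre together with its multidegree on the special fibre, so once you note that the Eisenbud--Harris twist concentrating the degree of $\mathcal L$ on $\widetilde X$ forces $\mathrm{Nm}_\pi$ of the twisted bundle to have exactly the multidegree of the $X$-aspect of the limit canonical series on $C$ (and analogously over $Y$), identities (1) and (2) are forced and not merely compatible in degree.
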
 

We are now ready to use a degeneration argument in order to prove our main result. 

\begin{trm}
	When $g = \frac{r(r+1)}{2} +1$, the space $\mathcal{V}^r_g$ has a unique irreducible component dominating $\mathcal{R}_g$.  
\end{trm}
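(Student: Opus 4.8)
The strategy is a degeneration argument to a carefully chosen point of the boundary $\partial\rr_g$, combined with the theory of Prym limit linear series developed above. Suppose $\mathcal{V}^r_g$ had two irreducible components $\mathcal{Z}_1, \mathcal{Z}_2$ dominating $\cR_g$; I would pick a one-parameter family of smooth Prym curves degenerating to a specially chosen Prym curve $[\pi\colon\widetilde{C}\to C]$ with $\widetilde{C}$ of compact type, and count Prym limit $g^r_{2g-2}$ on this degenerate curve. If I can exhibit a boundary point over which there is a \emph{unique} Prym limit $g^r_{2g-2}$ (or at least a unique one arising as a limit of honest $V^r(C',\eta')$-points, arising with multiplicity one in an appropriate enumerative sense), then $\mathcal{Z}_1$ and $\mathcal{Z}_2$ must both specialize to it, forcing $\mathcal{Z}_1=\mathcal{Z}_2$.

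The natural test curve to use is one where $C$ is a chain (or a curve with one special component $X$ of small genus and a tail of elliptic curves), so that the Prym structure $\eta$ is supported on a single component $X$ as in the setup preceding the theorem. On such a chain the Prym limit linear series condition decouples into aspect-by-aspect conditions: on the two components $Y_1,Y_2$ of $\widetilde C$ over each ordinary component $Y$ of $C$ the aspects must satisfy the product relation $L_{Y_1}\otimes L_{Y_2}\cong \omega_Y((2g-2+2g^Y_0)q^Y+\sum g_i^Y p_i^Y)$, and on $\widetilde X$ the norm condition $Nm_{\pi_{|\widetilde X}}L_{\widetilde X}\cong \omega_X(\sum 2g^X_i p_i)$ must hold, together with the compatibility of vanishing sequences at the nodes forced by the limit linear series axioms. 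With $g=\frac{r(r+1)}{2}+1$ these numerical constraints are exactly tight (the expected dimension $g-1-\frac{r(r+1)}{2}$ of $V^r$ drops to $0$), so on the elliptic tails the vanishing sequences are forced to be the "ramification-maximal" ones, and one is reduced to a finite problem on $\widetilde X$. I would then choose $X$ so that this finite problem has a unique solution — for instance $X$ of genus $1$ or $2$ with generic marked points, where a direct analysis of line bundles on $\widetilde X$ with the prescribed norm and a prescribed vanishing sequence at the nodes pins down $L_{\widetilde X}$ uniquely.

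The technical heart is a \emph{smoothing/dimension} argument: one must know that every irreducible component of $\mathcal{V}^r_g$ dominating $\cR_g$ actually does specialize, over a general one-parameter degeneration to the chosen boundary point, to a Prym limit $g^r_{2g-2}$ that is smoothable, and conversely that the chosen limit series is the limit of a \emph{unique} local branch. This is where I expect the main obstacle to lie. The clean way to handle it is a dimension count in $\mathcal{G}^r_{2g-2}(\cR_g)$: show that the locus of Prym limit $g^r_{2g-2}$ over the chosen boundary divisor of $\rr_g$ has dimension exactly $\dim\cR_g - 1 = 3g-4$ (no excess), so that every component of $\overline{\mathcal{V}}^r_g$ dominating $\rr_g$ meets this boundary in a divisor and the fibre over a generic such boundary point is zero-dimensional; then the uniqueness of the Prym limit series at that point forces all these components to coincide. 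Establishing the "no excess dimension" and the reducedness (multiplicity one) of the limit point requires a careful local study of the moduli of limit linear series near the chosen curve — controlling the dimension of the space of refined Prym limit series via the additivity of the Brill–Noether numbers along the chain, exactly as in the classical Eisenbud–Harris smoothing theorem but keeping track of the norm constraint on the $\widetilde X$-aspect. Once the count is in place, irreducibility follows formally.
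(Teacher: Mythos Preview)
Your overall strategy---degenerate to a compact-type boundary Prym curve and analyse Prym limit $g^r_{2g-2}$'s there---matches the paper's. The gap is in the mechanism you propose for concluding. You hope to find a boundary point carrying a \emph{unique} Prym limit $g^r_{2g-2}$ with multiplicity one, so that two hypothetical dominant components would be forced to collide there. But $\mathcal{V}^r_g\to\cR_g$ is generically finite of degree $d=2^{r(r-1)/2}(g-1)!\prod_{i=1}^r\frac{(i-1)!}{(2i-1)!}$ (De~Concini--Pragacz), and $d>1$ for $r\ge 2$. Hence over any boundary point the fibre of $\overline{\mathcal{V}}^r_g\to\rr_g$ has length at least $d$; a single reduced limit is impossible. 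On a deep degeneration such as a chain of elliptic curves you may well obtain a set-theoretically unique limit, but it will carry multiplicity $d$, and then ``$\mathcal{Z}_1=\mathcal{Z}_2$'' does not follow: several branches can coalesce into one fat point. Showing that the limit point is unibranch in $\overline{\mathcal{V}}^r_g$ is exactly the hard transversality statement you flag as the main obstacle, and a dimension count alone does not address it.

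The paper avoids this by degenerating only to a generic point of the divisor $\Delta_1$: the $2$-torsion lives on an elliptic tail $E$ glued at $x$ to a single generic genus-$(g{-}1)$ curve $Y$ (so not a chain; the special component $X$ is $E$, not a large-genus piece). Additivity of adjusted Brill--Noether numbers forces the vanishing sequence of the $Y_1$-aspect at $x_1$ to be exactly $(g{-}r{-}1,\,g{-}r{+}1,\dots,g{+}r{-}1)$, and the norm relation $L_{Y_1}\otimes L_{Y_2}\cong\omega_Y(2g\cdot x)$ together with the elliptic-tail aspect then determines everything from $L_{Y_1}$. Thus the Prym-limit locus over $\Delta_1$ is birational to the pointed Brill--Noether variety $\mathcal{G}^r_{2g-2}(\alpha)$ over $\cM_{g-1,1}$, which Eisenbud--Harris show has a unique dominant irreducible component. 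The decisive input you are missing is a \emph{degree comparison}: one also has $\deg\bigl(\mathcal{G}^r_{2g-2}(\alpha)\to\cM_{g-1,1}\bigr)=d$. Every dominant component $\mathcal{Z}\subset\mathcal{V}^r_g$ must therefore specialise onto all of $\mathcal{G}^r_{2g-2}(\alpha)$, giving $\deg(\mathcal{Z}\to\cR_g)\ge d=\deg(\mathcal{V}^r_g\to\cR_g)$ and hence uniqueness. This matching of two independently computed degrees is what replaces the unavailable ``multiplicity one'' assertion.
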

\begin{proof} We consider the boundary divisor $\Delta_1 \subseteq \overline{\mathcal{R}}_g$ whose generic point is of the form $[Y\cup_xE, \OO_Y, \eta_E \neq \OO_E]$, where $Y$ and $E$ are components of genus $g-1$ and $1$ respectively. 
	Let $[Y_1\cup_{x_1}\widetilde{E}\cup_{x_2}Y_2 \rightarrow Y\cup_xE]$ be the double cover associated to a generic element of $\Delta_1$. We want to describe the locus of Prym limit $g^r_{2g-2}$'s on such a double cover. 
	
	Let $L$ be a Prym limit $g^r_{2g-2}$ on $[Y_1\cup_{x_1}\widetilde{E}\cup_{x_2}Y_2 \rightarrow Y\cup_xE]$. The additivity of the Brill-Noether numbers implies: 
	\[ \rho(2g-1,r,2g-2) = -r \geq \rho(L_{Y_1},x_1) + \rho(L_{\widetilde{E}}, x_1,x_2) + \rho(L_{Y_2},x_2). \]
	But we know from \cite[Theorem 1.1]{EisenbudHarrisg>23} and \cite[Proposition 1.4.1]{FarkasThesis} that $ \rho(L_{Y_1},x_1) \geq 0, \ \rho(L_{Y_2},x_2) \geq 0$ and $\rho(L_{\widetilde{E}}, x_1,x_2) \geq -r$. It is clear that these are in fact equalities and $L$ is a refined limit $g^r_{2g-2}$. 
	
	We denote by $0\leq a_0 < a_1<\cdots <a_r \leq 2g-2$ and $0\leq b_0 < b_1<\cdots <b_r \leq 2g-2$ the vanishing orders for the $Y_1$ and $Y_2$ aspects respectively. The equality $\rho(L_{\widetilde{E}}, x_1,x_2) = -r$ implies that $a_i + b_{r-i} = 2g-2$ for all $0\leq i\leq r$. 
	
	The genericity of $[Y_2,x_2] \in \cM_{g-1,1}$ together with $ \rho(L_{Y_2},x_2) = 0$ imply that $h^0(Y_2, L_{Y_2}(-b_ix_2)) = r+1-i$ for all $0\leq i \leq r$. Using that $L_{Y_1}\otimes L_{Y_2} \cong \omega_Y(2g\cdot x)$ and the Riemann-Roch theorem we obtain 
	\[ h^0\text{\large(}Y_1, L_{Y_1}(-(2+a_{r-i})q)\text{\large)} = g+r-1-a_{r-i}-i.\]  
	Choosing $i=0$ we get $a_r = g+r-1$. Inverting the roles of the $a_i$'s and $b_i$'s we obtain that $a_0 = g-r-1$. Because we have the divisorial equivalences 
    \[ a_ix_1 +b_{r-i}x_2 \equiv  a_jx_1 +b_{r-j}x_2 \] 
    on the elliptic curve $E$ for every $0\leq i,j \leq r$, we obtain that $a_i-a_{i-1} \geq 2$ for every $1\leq i\leq r$. This implies that $a_i = g-r+2i-1$ for every $0\leq i \leq r$.
	
	We now view the moduli space $\cM_{g-1,1}$ as embedded in $\rr_g$ via the map $\pi\colon \cM_{g-1,1} \rightarrow \rr_g$ sending a pointed curve $[Y,x]\in \cM_{g-1,1}$ to $[Y\cup_xE, \OO_Y, \eta_E]$ where $[E,x]$ is a generic elliptic curve and $\eta_E$ is a 2-torsion line bundle on $E$. For the ramification sequence $\alpha = (g-r-1,\ldots, g-1)$ associated to the vanishing orders 
	$ a = (a_0,\ldots, a_r) = (g-r-1,\ldots, g+r-1)$, we consider the locus $\mathcal{G}^r_{2g-2}(\alpha)$ parametrizing pairs $[C,p,L]$ where $[C,p]\in\cM_{g-1,1}$ and $L$ is a $g^r_{2g-2}$ having vanishing orders greater or equal to $a$ at the point $p$.  Then the locus of Prym limit $g^r_{2g-2}$ over $\mathrm{Im}(\pi)$ is birationally isomorphic to $\mathcal{G}^r_{2g-2}(\alpha)$.
	
	We know from \cite[Lemma 3.6]{EisenbudHarrisBN-1} that $\mathcal{G}^r_{2g-2}(\alpha)$ has a unique irreducible component dominating $\cM_{g-1,1}$. Moreover 
	\[ \mathrm{deg}\text{\large (} \mathcal{G}^r_{2g-2}(\alpha) \rightarrow \cM_{g-1,1}\text{\large )}  = 2^{\frac{r(r-1)}{2}}\cdot(g-1)! \cdot \prod_{i=1}^r \frac{(i-1)!}{(2i-1)!} \]
	as stated on the second page of \cite{FarkasCastel}. 
	On the other hand we have from \cite[Theorem 9]{DeConciniPragacz} that 
	\[ \mathrm{deg}\text{\large (} \mathcal{V}^r_g \rightarrow \cR_g\text{\large )}  = 2^{\frac{r(r-1)}{2}}\cdot(g-1)! \cdot \prod_{i=1}^r \frac{(i-1)!}{(2i-1)!}. \]. 
	
	We conclude that all dominant irreducible components of $\mathcal{V}^r_g$ contain $\mathcal{G}^r_{2g-2}(\alpha)$ in their closure. From this we get that each such component map to $\cR_g$ with degree at least $2^{\frac{r(r-1)}{2}}\cdot(g-1)! \cdot \prod_{i=1}^r \frac{(i-1)!}{(2i-1)!}$, implying unicity.
\end{proof}

\bibliography{main}

\begin{thebibliography}{CLRW20}

\bibitem[BCF04]{Casa}
E.~Ballico, C.~Casagrande, and C.~Fontanari.
\newblock Moduli of {P}rym curves.
\newblock {\em Documenta Mathematica}, \textbf{9}:265--281, 2004.

\bibitem[Ber87]{Bertram}
A.~Bertram.
\newblock An existence theorem for {P}rym special divisors.
\newblock {\em Inventiones mathematicae}, \textbf{90}:669--671, 1987.

\bibitem[CLRW20]{tropicalPBN-4authors}
S.~Creech, Y.~Len, C.~Ritter, and D.~Wu.
\newblock {Prym–{B}rill–{N}oether {L}oci of {S}pecial {C}urves}.
\newblock {\em International Mathematics Research Notices},
  \textbf{2022}:2688--2728, 2020.

\bibitem[DCP95]{DeConciniPragacz}
C.~De~Concini and P.~Pragacz.
\newblock On the class of {B}rill-{N}oether loci for {P}rym varieties.
\newblock {\em Mathematische Annalen}, \textbf{302}:687--698, 1995.

\bibitem[Deb00]{DebarreLefschetz}
O.~Debarre.
\newblock Th\'eor\`emes de {L}efschetz pour les lieux de d\'eg\'en\'erescence.
\newblock {\em Bulletin de la Soci\'et\'e Math\'ematique de France},
  \textbf{128}:283--308, 2000.

\bibitem[EH86]{limitlinearbasic}
D.~Eisenbud and J.~Harris.
\newblock Limit linear series: {B}asic theory.
\newblock {\em Inventiones mathematicae}, \textbf{85}:337--372, 1986.

\bibitem[EH87]{EisenbudHarrisg>23}
D.~Eisenbud and J.~Harris.
\newblock The {K}odaira dimension of the moduli space of curves of genus $\geq$
  23.
\newblock {\em Inventiones mathematicae}, \textbf{90}:359--387, 1987.

\bibitem[EH89]{EisenbudHarrisBN-1}
D.~Eisenbud and J.~Harris.
\newblock Irreducibility of some families of linear series with
  {B}rill-{N}oether number {-1}.
\newblock {\em Annales scientifiques de l'\'Ecole Normale Sup\'erieure},
  \textbf{22}:33--53, 1989.

\bibitem[Far00]{FarkasThesis}
G.~Farkas.
\newblock The birational geometry of the moduli space of curves.
\newblock {\em Academisch Proefschrift, Universitet van Amsterdam}, 2000.

\bibitem[FL10]{FarLud}
G.~Farkas and K.~Ludwig.
\newblock The {K}odaira dimension of the moduli space of {P}rym varieties.
\newblock {\em Journal of the European Mathematical Society},
  \textbf{12}:755--795, 2010.

\bibitem[FT16]{FarkasCastel}
G.~Farkas and N.~Tarasca.
\newblock Pointed {C}astelnuovo numbers.
\newblock {\em Mathematical Research Letters}, \textbf{23}:389--404, 2016.

\bibitem[JP21]{JensenPaynesurvey}
D.~Jensen and S.~Payne.
\newblock Recent {D}evelopments in {B}rill-{N}oether {T}heory.
\newblock {\em Preprint, arXiv:2111.00351}, 2021.

\bibitem[LU21]{tropicalPBN-Len_Ulirsch}
Y.~Len and M.~Ulirsch.
\newblock {Skeletons of {P}rym varieties and {B}rill–{N}oether theory}.
\newblock {\em Algebra {$\&$} Number Theory}, \textbf{15}:785--820, 2021.

\bibitem[Mum71]{mumfordtheta}
D.~Mumford.
\newblock Theta characteristics of an algebraic curve.
\newblock {\em Annales Scientifiques de l'\'{E}cole Normale Sup\'{e}rieure},
  \textbf{4}:181--192, 1971.

\bibitem[Mum74]{MumfordPrym}
D.~Mumford.
\newblock Prym varieties {I}.
\newblock {\em Contributions to analysis}, pages 325--350, 1974.

\bibitem[Sch17]{SchwarzPrym}
Irene Schwarz.
\newblock Brill–{N}oether theory for cyclic covers.
\newblock {\em Journal of Pure and Applied Algebra}, \textbf{221}:2420--2430,
  2017.

\bibitem[Wel85]{Welters}
G.~Welters.
\newblock A theorem of {G}ieseker-{P}etri type for {P}rym varieties.
\newblock {\em Annales Scientifiques de l'École Normale Supérieure},
  \textbf{18}:671--683, 1985.

\end{thebibliography}
\bibliographystyle{alpha}
\Addresses
\end{document}